\documentclass[a4paper,11pt,reqno]{amsart}

\edef\restoreparindent{\parindent=\the\parindent\relax}
\usepackage{parskip}
\usepackage{graphicx}
\usepackage{xcolor}
\usepackage{dsfont}
\usepackage{comment}
\restoreparindent
\numberwithin{equation}{section}
\usepackage[
    linktoc=page,       
    colorlinks=false,   
    pdfborder={0 0 1}   
]{hyperref}

\newtheorem{thm}{Theorem}
\newtheorem{lem}{Lemma}
\newtheorem{cor}{Corollary}
\newtheorem{conj}{Conjecture}
\newtheorem{Thm}{Theorem}

\newtheorem{Lem}{Lemma}

\theoremstyle{definition}
\newtheorem{rem}{Remark}
\newtheorem{defn}{Definition}
\newtheorem{example}{Example}
\DeclareMathOperator{\sgn}{sgn}

\newcommand{\be}{\begin{equation}}
\newcommand{\ee}{\end{equation}}

\newcommand{\blem}{\begin{lem}}
\newcommand{\elem}{\end{lem}}

\newcommand{\bdefn}{\begin{defn}}
\newcommand{\edefn}{\end{defn}}

\newcommand{\bthm}{\begin{thm}}
\newcommand{\ethm}{\end{thm}}

\newcommand{\bcor}{\begin{cor}}
\newcommand{\ecor}{\end{cor}}

\newcommand{\bconj}{\begin{conj}}
\newcommand{\econj}{\end{conj}}

\newcommand{\beg}{\begin{example}}
\newcommand{\eeg}{\end{example}}

\newcommand{\beqq}{\begin{eqnarray*}}
\newcommand{\eeqq}{\end{eqnarray*}}

\newcommand{\brem}{\begin{rem}}
\newcommand{\erem}{\end{rem}}

\newcommand{\bpf}{\begin{proof}}
\newcommand{\epf}{\end{proof}}

\begin{document} 

\bibliographystyle{amsplain}

\title[Schwarz-Pick Lemma for Invariant Harmonic Functions on the Complex Unit Ball]
{Schwarz-Pick Lemma for Invariant Harmonic Functions on the Complex Unit Ball}

\author{Kapil Jaglan}
\address{Kapil Jaglan, Department of Mathematics, Kyungpook National University, Daegu 41566, Republic of Korea.}
\email{kapiljaglan0501@gmail.com}

\author{Aeryeong Seo}
\address{Aeryeong Seo, Department of Mathematics, Kyungpook National University, Daegu 41566, Republic of Korea.}
\email{aeryeong.seo@knu.ac.kr}

\subjclass[2020]{Primary 31B05; Secondary 30C80.}

\keywords{Complex Unit Ball, Invariant Harmonic Function, Gradient Estimate}

\date{\today}

\begin{abstract}
This paper establishes a sharp Schwarz-Pick type inequality for real-valued invariant harmonic functions defined on the complex unit ball $\mathbb B^n$. The proof of this main result simultaneously provides a solution to a natural extension of the Khavinson conjecture for invariant harmonic functions, demonstrating that the sharp constants for the gradient and the radial derivative coincide. As further consequences of the main theorem, we derive two corollaries.
\end{abstract}

\maketitle
\tableofcontents
\pagestyle{myheadings} 
\markboth{}{}

\section{Introduction}
The well-known classical Schwarz-Pick lemma states that if $f$ is a holomorphic map from the unit disc into itself satisfying $f(0)=0$, then 
$$
|f'(z)| \leq \frac{1}{1-|z|^{2}} \quad \text{ for any } |z|<1.
$$
This inequality implies the fact that holomorphic self-maps of the unit disk are distance-decreasing with respect to the Poincar\'e metric and it shows that holomorphic maps are  controlled by the underlying hyperbolic metric.
The Schwarz–Pick lemma, together with its higher-dimensional generalizations to
holomorphic mappings between complex manifolds, plays a fundamental role in
several complex variables and complex geometry.
It underlies the theory of invariant metrics, such as the Kobayashi and Carath\'eodory
metrics, and provides a basic tool for understanding rigidity, hyperbolicity, and boundary behavior of holomorphic mappings.

This perspective naturally leads to the question of whether analogous Schwarz–Pick type inequalities hold for broader classes of mappings, in particular for (invariant) harmonic functions.
Although harmonic functions lack the strong structural properties of holomorphic functions, they nevertheless enjoy  rigidity phenomena under suitable conditions. In particular, one may ask whether harmonic functions defined on the unit disk or higher-dimensional domains satisfy sharp gradient estimates or distance-decreasing properties analogous to those in the classical Schwarz–Pick lemma.

Such Schwarz–Pick type results for harmonic functions have attracted considerable attention in recent years. They aim to understand how the hyperbolic geometry of the domain still governs the behavior of harmonic functions, even though these functions are not holomorphic.
For this line of research, see \cite{MR3619443, MR4490949,  MR2833528, MR3123573,MR3281685,MR3753181,MR3794078} and the references therein. In particular, Liu \cite{MR4490949} recently established a Schwarz–Pick type inequality for harmonic functions on the unit ball of $\mathbb{R}^n$.

\begin{Thm}[Liu \cite{MR4490949}]\label{ThmA}
Let $u$ be a real-valued bounded harmonic function on the unit ball $\mathbb{B}^n_{\mathbb{R}}=\{x\in \mathbb R^n : \|x\|<1\}$ of $\mathbb{R}^n$.
\begin{itemize}
    \item [(i)] When $n=2$ or $n\geq 4$, we have the inequality
    \be\label{eq7}
    |\nabla u(x)| \leq \frac{2\Gamma(\frac{n+2}{2})}{\sqrt{\pi}~\Gamma(\frac{n+1}{2})} \frac{1}{1-\|x\|^2}\sup_{y \in \mathbb{B}^n_{\mathbb{R}}}|u(y)|, \quad x \in \mathbb{B}^n_{\mathbb{R}}.
    \ee
Equality holds in \eqref{eq7} at one point $z$ if and only if $z = 0$ and $u = U \circ T$ for some orthogonal transformation $T$, where $U$ is the Poisson integral of the function that equals $1$ on a hemisphere and $-1$ on the remaining hemisphere. \\
\item [(ii)]  When $n=3$, we have
$$
|\nabla u(x)| < \frac{8}{3\sqrt{3}}\frac{1}{1-\|x\|^2}\sup_{y \in \mathbb{B}^3_{\mathbb{R}}}|u(y)|, \quad x \in \mathbb{B}^3_{\mathbb{R}}. 
$$
The constant $\frac{8}{3\sqrt{3}}$ here is the best possible.
\end{itemize}
\end{Thm}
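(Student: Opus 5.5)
The plan is to reduce the gradient estimate at an arbitrary point $x$ to the estimate at the origin, using Möbius invariance of the Poisson kernel, and then to solve an extremal problem for the boundary data. Normalize $\sup|u|=1$, so that $u=P[f]$ for some $f\in L^\infty(\partial\mathbb B^n_{\mathbb R})$ with $|f|\le 1$.

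\emph{Integral formula.} Fix $x$ and a unit vector $\ell$. Differentiating the Poisson kernel
$$
P(x,\zeta)=\frac{1-\|x\|^2}{\|x-\zeta\|^n}
$$
gives
$$
\langle\nabla u(x),\ell\rangle=\int_{\partial\mathbb B}\langle \nabla_x P(x,\zeta),\ell\rangle f(\zeta)\,d\sigma(\zeta).
$$
Since $|f|\le1$, the sharp bound is
$$
|\nabla u(x)|\le \sup_\ell \int |\langle \nabla_x P(x,\zeta),\ell\rangle|\,d\sigma(\zeta),
$$
with equality attained by $f=\sgn\langle\nabla_xP,\ell\rangle$. So the proof reduces to computing
$$
C(x)=(1-\|x\|^2)\sup_{\ell}\int|\langle \nabla_x P(x,\zeta),\ell\rangle|\,d\sigma.
$$

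\emph{Origin.} At $x=0$ we have $\nabla_xP(0,\zeta)=n\zeta$. Hence
$$
C(0)=n\int|\zeta_1|\,d\sigma=\frac{2\Gamma(\frac{n+2}{2})}{\sqrt\pi\,\Gamma(\frac{n+1}{2})},
$$
which is a standard beta-integral computation. Equality occurs exactly for the hemisphere data, which yields the extremal function $U$ in part (i).

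\emph{General $x$.} Rotate so that $x=re_1$ and write $\ell=\cos\theta\, e_1+\sin\theta\, e_2$. Then change variables $\zeta=\varphi_x(\eta)$ with a Möbius involution of the ball. Under this substitution $P(x,\zeta)\,d\sigma(\zeta)=d\sigma(\eta)$, and the integrand becomes an explicit function of $r$, $\theta$ and $\eta_1,\eta_2$. The goal is to show that
$$
C(r)=\sup_\theta C(r,\theta)\le C(0),
$$
with strict inequality for $r>0$ whenever $n\ne 3$.

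\emph{Main obstacle.} The hard step is the extremal problem in $\theta$. For the radial direction $\theta=0$ one can check directly that $C(r,0)$ is decreasing in $r$. The tangential and mixed directions are harder. My first attempt would be to express $C(r,\theta)$ as an integral over $t=\eta_1\in[-1,1]$ against the weight $(1-t^2)^{(n-3)/2}$. I would then show by a convexity (or Cauchy–Schwarz) argument in $\theta$ that the maximum over $\theta$ is attained at $\theta=0$ or $\theta=\pi/2$, and finally compare both endpoint values with $C(0)$.

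\emph{The case $n=3$.} Here the weight $(1-t^2)^{(n-3)/2}$ degenerates to a constant, so the behavior should differ. I expect the tangential quantity to increase in $r$ toward the limit value $8/(3\sqrt3)$ as $r\to1$, without attaining it. This would give the strict inequality with the sharp constant in part (ii), and sharpness would follow by letting $x$ tend to the boundary along a sequence of extremal data $f$.
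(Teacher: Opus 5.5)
The paper does not prove this statement (it is quoted from Liu), so the relevant comparison is with the proof of Theorem~\ref{thm1}, whose strategy you are transplanting to the Euclidean ball. The transplant breaks at the step you treat as routine. Let $\varphi_x$ be the M\"obius involution of $\mathbb{B}^n_{\mathbb{R}}$ exchanging $0$ and $x$, and let $\zeta=\varphi_x(\eta)$ with $\|\eta\|=1$. Then $d\sigma(\zeta)=\bigl(\frac{1-\|x\|^2}{\|x-\eta\|^2}\bigr)^{n-1}d\sigma(\eta)$ and $\|x-\zeta\|=\frac{1-\|x\|^2}{\|x-\eta\|}$, hence
\[
P(x,\zeta)\,d\sigma(\zeta)=\|x-\eta\|^{2-n}\,d\sigma(\eta),
\]
which equals $d\sigma(\eta)$ only when $n=2$. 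In the complex setting, Lemma~\ref{Lemma1} says precisely that the Poisson--Szeg\H{o} kernel \emph{is} the boundary Jacobian of $\phi_z$. That is what collapses $\mathcal C(z,l)$ to $\int|\mathrm{Re}\langle\eta,v(z,l)\rangle|\,d\sigma(\eta)$, after which the dependence on $l$ and on $\|z\|$ is read off from $\|v\|^2=1-\|z\|^2+|\langle l,z\rangle|^2$. For $n\ge3$ Euclidean harmonicity is not M\"obius invariant. After your substitution the integrand is a non-linear function of $\eta$ times the weight $\|x-\eta\|^{2-n}$, and no such reduction is available.

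Hence the whole theorem lies in your ``main obstacle'', which the proposal does not resolve. Showing that the supremum over $\theta$ is attained in the radial direction is exactly Khavinson's conjecture, open from 1992 until Liu's proof (after Kalaj and Melentijevi\'c handled $n=4$ and $n=3$). The tools you name cannot do it. The map $\ell\mapsto\int|\langle\nabla_xP(x,\zeta),\ell\rangle|\,d\sigma(\zeta)$ is a norm symmetric under reflection in both axes, but such norms may peak off the axes (think of $|\ell_1|+|\ell_2|$). So convexity does not push the maximum to $\theta\in\{0,\pi/2\}$, and Cauchy--Schwarz only gives the non-sharp bound $\int\|\nabla_xP\|\,d\sigma$.

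Your heuristics are also reversed. For $n=3$, in your normalization ($\sigma$ normalized, factor $1-r^2$ included), a direct computation gives
\[
C(r,0)=\frac{(3+r^2)^{3/2}}{3\sqrt3\,r^2}-\frac{1-r^2}{r^2},
\]
which \emph{increases} from $3/2$ to $8/(3\sqrt3)$. The tangential quantity $C(r,\pi/2)$ instead tends to $4/\pi<3/2$. So the radial constant is not decreasing, and $8/(3\sqrt3)$ comes from the radial, not the tangential, direction.

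For $n=2$, $U\circ\varphi_a$ is harmonic with $|U\circ\varphi_a|<1$ and attains equality at $a$, so $C(r)\equiv4/\pi$. The strict inequality you aim for is therefore false there, and the equality clause as quoted can only hold for $n\ge4$.

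What is actually needed is:
\begin{enumerate}
\item[(a)] $C(r)=C(r,0)$, which is Khavinson's conjecture;
\item[(b)] a genuine analysis of the radial integral, giving $C(r,0)<C(0)$ for $0<r<1$ when $n\ge4$, and monotone convergence to $8/(3\sqrt3)$ when $n=3$.
\end{enumerate}
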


This Liu’s work continues his earlier study \cite{MR4263696}, where he presented a complete proof of the Khavinson conjecture. The conjecture asserts that, for bounded harmonic functions on the unit ball of $\mathbb{R}^n$, the sharp constants governing the radial derivative and the gradient coincide. Gradual developments toward the solution of the Khavinson conjecture can be found in \cite{MR1165171, MR2839860, MR3725479, MR3619443, MR3975032}.

Let $\mathbb B^n = \{z\in\mathbb C^n: \|z\|<1\}$ be the complex unit ball in the complex Euclidean space $\mathbb C^n$ and let $K\colon \mathbb B^n\times\mathbb B^n\to\mathbb C$ be its (normalized) Bergman kernel given by 
$$
K(z,w) := \frac{1}{(1-\langle z,  w \rangle)^{n+1}}
$$
where $\langle , \rangle$ denote the Hermitian dot product, i.e. $\langle z,w\rangle = \sum_j z_j\overline w_j$.
Let $g = (g_{i\bar j})$ denote the Bergman metric defined by 
$$
g_{i\bar j} = (n+1)\left(\frac{\delta_{ij}}{1-\|z\|^2}
+\frac{\overline z_i z_j}{(1-\|z\|^2)^2}\right)
$$
and $g^{-1}=(g^{i\bar j})$ be its inverse, given by 
$$
g^{i\bar j} = \frac{1-\|z\|^2}{n+1}( \delta_{ij} - z_i\overline z_j).
$$
Denote by $\Delta_B$ the Laplace–Beltrami operator associated with the metric $g$.
A function $f \colon \mathbb B^n \to \mathbb C$ is called an invariant harmonic function if it satisfies $\Delta_B f \equiv 0$ on $\mathbb B^n$. Explicitly, $f$ satisfies
$$
\Delta _{B}(f)(z)= 4(1-\|z\|^2)\sum_{i,j=1}^{n}(\delta_{ij}-z_{i}\overline{z}_{j})\frac{\partial ^2 f}{\partial z_{i}\partial \overline{z}_{j}}(z),$$
where $\delta_{ij}$ is the Kronecker delta.
Let $\nabla$ denote the gradient with respect to the Euclidean metric of $\mathbb C^n$, i.e. 
for a real-valued function $f\colon\mathbb B^n\to\mathbb R$ and coordinate $z_j=x_j+\sqrt{-1}y_j$, 
$$
\nabla f = 4\,\text{Re }\sum_{j=1}^n \frac{\partial f}{\partial \overline z_j}\frac{\partial}{\partial  z_j} = \sum_{j=1}^n \left(
\frac{\partial f}{\partial x_j}\frac{\partial}{\partial x_j} + \frac{\partial f}{\partial y_j}\frac{\partial}{\partial y_j}\right).
$$

In this paper, we study invariant harmonic functions and generalize Theorem~\ref{ThmA} to the setting of invariant harmonic functions on $\mathbb B^n$.

\begin{thm}\label{thm1}
Let $h$ be a real-valued invariant harmonic function on the unit ball $\mathbb B^n \subset \mathbb C^n$ and $|h| \leq 1$ on $\mathbb B^n$. Then, 
\be\label{eq6}
\|\nabla h(z)\| \leq \frac{2\,\Gamma(n+1)}{\sqrt{\pi}~\Gamma(n+\frac{1}{2})} \frac{1}{1-\|z\|^2}, \quad z \in \mathbb B^n.
\ee 
Moreover, the inequality is sharp.
\end{thm}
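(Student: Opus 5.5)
By the maximum principle, $h$ is bounded, so it is the invariant Poisson integral of a boundary function $f\in L^\infty(\partial\mathbb B^n)$ with $|f|\le 1$:
$$
h(z)=\int_{\partial\mathbb B^n}P(z,\zeta)f(\zeta)\,d\sigma(\zeta),\qquad P(z,\zeta)=\frac{(1-\|z\|^2)^n}{|1-\langle z,\zeta\rangle|^{2n}} .
$$
Fix $z$ and a real unit vector $\ell\in\mathbb C^n\cong\mathbb R^{2n}$. Then $\langle \nabla h(z),\ell\rangle_{\mathbb R}=\int \partial_\ell P(z,\zeta) f(\zeta)\,d\sigma$. Hence
$$
\|\nabla h(z)\|\le \sup_{\|\ell\|=1}\int_{\partial\mathbb B^n}|\partial_\ell P(z,\zeta)|\,d\sigma(\zeta),
$$
with equality for $f=\operatorname{sgn}\partial_\ell P(z,\cdot)$ at the maximizing $\ell$. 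So the theorem reduces to computing
$$
C(z)=(1-\|z\|^2)\sup_{\|\ell\|=1}\int|\partial_\ell P(z,\zeta)|\,d\sigma(\zeta)
$$
and showing $C(z)\le C(0)$.

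\textbf{Step 1 (the origin).} Differentiating,
$$
\partial_\ell P=2nP\,\Bigl(\operatorname{Re}\frac{\langle \ell,\zeta\rangle}{1-\langle z,\zeta\rangle}-\frac{\operatorname{Re}\langle \ell,z\rangle}{1-\|z\|^2}\Bigr).
$$
At $z=0$ this is $2n\operatorname{Re}\langle\ell,\zeta\rangle$. By unitary invariance we may take $\ell=e_1$, and then
$$
C(0)=2n\int|\operatorname{Re}\zeta_1|\,d\sigma=2n\,\frac{\Gamma(n)}{\sqrt\pi\,\Gamma(n+\frac12)},
$$
using the standard one-dimensional marginal of $\sigma$ on $S^{2n-1}$. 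This equals the constant $\frac{2\Gamma(n+1)}{\sqrt\pi\,\Gamma(n+\frac12)}$. Sharpness follows from $h$ equal to the Poisson integral of $\operatorname{sgn}(\operatorname{Re}\zeta_1)$, which attains the bound at $z=0$.

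\textbf{Step 2 (reduction for $z\neq0$).} By unitary rotation we may assume $z=re_1$. Substitute $\zeta=\varphi_z(\eta)$, where $\varphi_z$ is the involutive automorphism exchanging $0$ and $z$. This substitution turns $P(z,\zeta)\,d\sigma(\zeta)$ into $d\sigma(\eta)$. Using $1-\langle z,\varphi_z(\eta)\rangle=(1-\|z\|^2)/(1-\langle \eta,z\rangle)$, the bracket in $\partial_\ell P$ becomes an explicit function of $\eta$. Write $\ell=\alpha e_1+\beta e_1'+\gamma w$, where $e_1'=ie_1$ and $w\perp_{\mathbb C}e_1$ is a unit vector. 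Then
$$
(1-r^2)\,\partial_\ell P\,d\sigma\ \longrightarrow\ 2n\,\Bigl|\operatorname{Re}\bigl((\alpha+i\beta)\,\eta_1'\bigr)+\gamma\sqrt{1-r^2}\,\operatorname{Re}\bigl(\tfrac{\eta_w\,(\cdots)}{1-r\bar\eta_1}\bigr)\Bigr|\,d\sigma(\eta).
$$
Here $\eta_1'$ is a Möbius image of $\eta_1$. The exact form will come out of the computation, but the radial/$ie_1$ part should reduce to $\operatorname{Re}$ of a linear expression in $\eta$ times $(1-r\eta_1)$-type factors. So $C(r)=\sup_{\alpha^2+\beta^2+\gamma^2=1}I(r;\alpha,\beta,\gamma)$ is an explicit integral over the sphere.

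\textbf{Step 3 (the main obstacle).} We must show $I(r;\alpha,\beta,\gamma)\le C(0)$ for all $r\in[0,1)$ and all unit $(\alpha,\beta,\gamma)$. This is the analogue of the Khavinson-type statement: the radial direction ($\gamma=0$) should be the worst case, and the radial constant should decrease in $r$.

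For the radial part I would first verify directly that, with $\gamma=0$, $I$ equals $C(0)$ identically or is decreasing in $r$. After the Möbius change the integrand should become $|\operatorname{Re}\eta_1|$ times a factor whose $\sigma$-average is at most $1$.

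For general $\ell$, I would reduce the $2n$-dimensional integral to a two- or three-variable integral by integrating out the coordinates orthogonal to $e_1$ and $w$, using the known density of $(\eta_1,\eta_w)$ on $S^{2n-1}$, which is proportional to $(1-|\eta_1|^2-|\eta_w|^2)^{n-3}$. Then I would apply Cauchy--Schwarz or convexity in $(\alpha,\beta,\gamma)$ on the sphere to show the maximum occurs at $\gamma=0$. If convexity fails, I would follow Liu's method: write $I$ as $\max_{f}$ and bound the tangential contribution by the factor $\sqrt{1-r^2}\le1$. I expect this comparison between the tangential and radial directions to be the hardest and most computational part.
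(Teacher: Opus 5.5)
Your setup matches the paper's: the Poisson representation, the bound $\|\nabla h(z)\|\le\sup_\ell\int|\partial_\ell P|\,d\sigma$, the extremal data $\operatorname{sgn}\partial_\ell P$, the computation at $z=0$, and the substitution $\zeta=\varphi_z(\eta)$ with $P(z,\zeta)\,d\sigma(\zeta)=d\sigma(\eta)$. But there is a genuine gap: for $z\neq0$ you never prove $C(z)\le C(0)$. Step~3 is a list of strategies (marginals, convexity in $(\alpha,\beta,\gamma)$, Liu's method), none carried out. It also rests on the expectation that, after the substitution, Möbius-type factors such as $\eta_1'$ or $(1-r\bar\eta_1)^{-1}$ remain in the integrand. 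That expectation is wrong: the substitution linearizes the integrand exactly, and this is the idea you are missing. Your identity is also conjugated. A direct computation, using $\langle z,P_z\eta\rangle=\langle z,\eta\rangle$ and $\langle z,Q_z\eta\rangle=0$, gives $1-\langle z,\varphi_z(\eta)\rangle=(1-\|z\|^2)/(1-\langle z,\eta\rangle)$. With the version you wrote, a spurious unimodular factor $(1-\langle\eta,z\rangle)/(1-\langle z,\eta\rangle)$ would survive in your bracket.

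The missing computation is as follows. Write $\varphi_z(\eta)=(z-P_z\eta-s_zQ_z\eta)/(1-\langle\eta,z\rangle)$ with $s_z=\sqrt{1-\|z\|^2}$. Then $\langle\ell,\varphi_z(\eta)\rangle=\langle\ell,z-P_z\eta-s_zQ_z\eta\rangle/(1-\langle z,\eta\rangle)$, so the denominators cancel. Since $P_z,Q_z$ are self-adjoint,
\[
\frac{\langle\ell,\zeta\rangle}{1-\langle z,\zeta\rangle}-\frac{\langle\ell,z\rangle}{1-\|z\|^2}=-\frac{\langle v,\eta\rangle}{1-\|z\|^2},\qquad \zeta=\varphi_z(\eta),\quad v=P_z\ell+s_zQ_z\ell .
\]
Taking real parts in your formula for $\partial_\ell P$ gives
\[
(1-\|z\|^2)\int_{\partial\mathbb B^n}|\partial_\ell P(z,\zeta)|\,d\sigma(\zeta)=2n\int_{\partial\mathbb B^n}|\operatorname{Re}\langle\eta,v\rangle|\,d\sigma(\eta)=2n\|v\|\int_{\partial\mathbb B^n}|\operatorname{Re}\eta_1|\,d\sigma(\eta).
\]
This is exactly your Step~1 quantity multiplied by $\|v\|$. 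Moreover,
\[
\|v\|^2=\|P_z\ell\|^2+(1-\|z\|^2)\|Q_z\ell\|^2=1-\|z\|^2+|\langle\ell,z\rangle|^2\le 1 ,
\]
which in your notation is $\|v\|^2=1-r^2\gamma^2$. Hence $C(z)=C(0)$ for every $z$. For $z\neq0$ the maximum is attained exactly when $\ell$ lies in the complex line through $z$, so no Khavinson-type comparison between tangential and radial directions is needed. This is the paper's argument. It also gives sharpness at every point $z$, using the boundary data $\operatorname{sgn}\operatorname{Re}\langle\varphi_z(\zeta),v\rangle$ with $\ell=z/\|z\|$, rather than only at the origin as in your Step~1.
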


In the specific case where $n=1$, inequality \eqref{eq6} reduces to:
$$
|\nabla h(z) | \leq \frac{4}{\pi} \frac{1}{1-|z|^2}, \quad z \in \mathbb{B}^1,
$$
which is consistent with the result established in \cite[Theorem 3]{MR1029679}.

Moreover, we show that the analogue of the Khavinson conjecture holds for bounded invariant harmonic functions on the complex unit ball (see Remark~\ref{remark}). Specifically, for any point in $\mathbb B^n$, the maximal gradient is attained when the derivative is taken in the radial direction.

As a consequence of Theorem~\ref{thm1}, we obtain two corollaries.
We denote by $\nabla_B$ the gradient with respect to $g$, i.e. $$
\nabla_B f =  
\sum_{i,j} g^{i\bar j} \frac{\partial f}{\partial \overline z_j}\frac{\partial}{\partial z_i},
$$
and by $\| v\|_B$ the norm of a vector $v$ with respect to $g$.

\begin{cor}\label{cor1}
Let $h$ be a real-valued invariant harmonic function on the unit ball $\mathbb B^n$ and $|h| \leq 1$ on $\mathbb B^n.$ Then, for any $z\in\mathbb B^n$, 
\be \nonumber
\frac{1-\|z\|^2}{2\sqrt{n+1}}\|\nabla h\|
\leq \|\nabla_B h(z)\|_B 
\leq \frac{2\,\Gamma(n+1)}{\sqrt{\pi(n+1)}\, \Gamma(n+\frac{1}{2})}
\ee
and the second inequality is sharp. As a consequence, we have
\begin{equation}\label{Lipschitz}
|h(z)-h(w)| \leq \frac{2\,\Gamma(n+1)}{\sqrt{\pi(n+1)} \,\Gamma(n+\frac{1}{2})}d_{\mathbb B^{n}}(z,w), 
\end{equation}
where $d_{\mathbb B^{n}}$ is the hyperbolic distance on $\mathbb B^{n}$.
\end{cor}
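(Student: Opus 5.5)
The plan is to compute $\|\nabla_B h\|_B$ explicitly in terms of the Euclidean gradient, bound it above at the origin using Theorem~\ref{thm1}, and then move the bound to every point using the automorphism invariance of the Bergman metric and of the class of invariant harmonic functions. The Lipschitz estimate \eqref{Lipschitz} will follow by integrating along geodesics.

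First, the pointwise formula. For real $h$ we have $\|\nabla_B h\|_B^2 = g^{i\bar j}\,\partial_i h\,\overline{\partial_j h}$ up to the normalisation fixed by the paper's convention for $\|\cdot\|_B$. Write $\partial h=(\partial h/\partial z_i)_i$, so that $\|\nabla h\| = 2\|\partial h\|$. The formula for $g^{i\bar j}$ then gives
\[
\|\nabla_B h\|_B^2=\frac{1-\|z\|^2}{n+1}\Bigl(\|\partial h\|^2-|\langle \partial h, \bar z\rangle|^2\Bigr),
\]
possibly with an extra constant factor coming from the normalisation. I will pin that constant down by checking against the stated bounds.

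The lower bound is direct. Since the eigenvalues of $\delta_{ij}-z_i\bar z_j$ lie in $[1-\|z\|^2,1]$, we get $\|\nabla_B h\|_B^2\ge \frac{(1-\|z\|^2)^2}{n+1}\|\partial h\|^2 = \frac{(1-\|z\|^2)^2}{4(n+1)}\|\nabla h\|^2$. Taking square roots gives exactly $\frac{1-\|z\|^2}{2\sqrt{n+1}}\|\nabla h\|$, which confirms the normalisation.

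The upper bound uses invariance. Given $z$, take the involutive automorphism $\varphi_z$ of $\mathbb B^n$ with $\varphi_z(0)=z$. Then $h\circ\varphi_z$ is again invariant harmonic with $|h\circ\varphi_z|\le1$, because $\Delta_B$ commutes with automorphisms. Since $\varphi_z$ is a $g$-isometry, $\|\nabla_B h(z)\|_B=\|\nabla_B(h\circ\varphi_z)(0)\|_B$. At the origin the formula above reads $\|\nabla_B f(0)\|_B = \frac{1}{2\sqrt{n+1}}\|\nabla f(0)\|$. Applying Theorem~\ref{thm1} at $0$ gives $\|\nabla f(0)\|\le \frac{2\Gamma(n+1)}{\sqrt\pi\,\Gamma(n+1/2)}$.

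This produces a constant $\frac{\Gamma(n+1)}{\sqrt{\pi(n+1)}\Gamma(n+1/2)}$, which is half of the stated one. The discrepancy is a factor $2$ hidden in the paper's convention for $\|v\|_B$: either $\|v\|_B^2 = 2\,\mathrm{Re}\,g(v,\bar v)$, or $\nabla_B$ is taken as the real vector $2\,\mathrm{Re}(\cdot)$. Fixing one consistent convention that reproduces both stated inequalities simultaneously is the bookkeeping point I expect to need the most care. The ratio of the upper constant to the lower coefficient at $z=0$ must equal $\frac{2\Gamma(n+1)}{\sqrt\pi\,\Gamma(n+1/2)}$, matching Theorem~\ref{thm1}, so one convention does work for both.

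Sharpness comes from the extremal function of Theorem~\ref{thm1}, which attains equality at $0$ and so attains equality in the upper bound at $0$. If the extremal is only approached by a sequence, sharpness follows by approximation.

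For \eqref{Lipschitz}, let $\gamma$ be a $g$-geodesic from $w$ to $z$. Then
\[
|h(z)-h(w)|\le\int|dh(\gamma')|\le\int\|\nabla_B h\|_B\|\gamma'\|_B,
\]
where the last step is Cauchy–Schwarz for $g$. This is at most the constant times the length of $\gamma$, which is $d_{\mathbb B^n}(z,w)$, provided the hyperbolic distance is the one induced by the same normalisation of $g$.
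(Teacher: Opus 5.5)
Your outline is the paper's own: the explicit formula gives the lower bound, $\phi_z$-invariance plus Theorem~\ref{thm1} at the origin gives the upper bound, the extremal function at $0$ gives sharpness, and Cauchy--Schwarz along curves gives \eqref{Lipschitz}. You are also right that a factor $2$ is off. The paper's chain uses $\|\nabla_B f(0)\|_B^2=\frac{1}{n+1}\|\nabla f(0)\|^2$, but its own explicit formula for $\|\nabla_B h\|_B^2$, like your computation, gives $\frac{1}{4(n+1)}\|\nabla f(0)\|^2$.

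The gap is that you never fix the normalisation, and your reason for believing a consistent one exists is an arithmetic error. The stated upper constant divided by the lower coefficient at $z=0$ is
\[
\frac{2\,\Gamma(n+1)}{\sqrt{\pi(n+1)}\,\Gamma(n+\frac12)}\cdot 2\sqrt{n+1}=\frac{4\,\Gamma(n+1)}{\sqrt{\pi}\,\Gamma(n+\frac12)},
\]
which is twice the constant of Theorem~\ref{thm1}. So no normalisation makes the lower bound an identity at $0$ while the upper bound is attained there. In the literal normalisation you computed, both displayed inequalities hold, but the sharp upper constant is $\frac{\Gamma(n+1)}{\sqrt{\pi(n+1)}\,\Gamma(n+\frac12)}$, so the sharpness claim is false. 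It becomes true only if $\|\nabla_B h\|_B$ is read as the length of the gradient of $h$ for the Riemannian metric $G=\mathrm{Re}\sum g_{i\bar j}\,dz_i\otimes d\bar z_j$. That gradient is $4\,\mathrm{Re}\,\nabla_B h$, whose $G$-length is $2\|\nabla_B h\|_B$; your option $\|v\|_B^2=2\,\mathrm{Re}\,g(v,\bar v)$ only changes the norm by $\sqrt2$. In the $G$ reading, $\|\nabla_B h(0)\|_B=\frac{1}{\sqrt{n+1}}\|\nabla h(0)\|$, the upper bound is sharp, and the lower bound holds with a factor $2$ to spare.

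The same factor breaks your Lipschitz step as written. For a real tangent vector $X$ with $(1,0)$-part $X^{1,0}$, one has $dh(X)=2\,\mathrm{Re}\,\langle\nabla_B h,X^{1,0}\rangle_B$ and $\|X\|_G=\|X^{1,0}\|_B$. Cauchy--Schwarz therefore gives $|dh(\gamma')|\le 2\|\nabla_B h\|_B\|\gamma'\|_G$, not $\|\nabla_B h\|_B\|\gamma'\|_B$. Combined with your sharp bound, this $2$ is exactly what yields the constant in \eqref{Lipschitz}, with $d_{\mathbb B^n}$ the $G$-distance ($G$ is $(n+1)$ times Euclidean at $0$). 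Without it you would be asserting half that constant, which the extremal function violates near $0$.

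The paper's proof makes both slips: the $\frac{1}{n+1}$ at the origin, and the missing factor $2$ in $|h(z)-h(w)|\le|\int_0^1\langle\nabla_B h,\gamma'\rangle_B\,dt|$. They cancel in the final estimate. To close your argument, adopt the $G$-gradient normalisation from the start and carry it consistently through the lower bound, the upper bound with sharpness, and the Lipschitz estimate.
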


The next corollary shows that the inequality \eqref{eq6} in Theorem \ref{thm1} holds for vector-valued functions whose components are invariant harmonic functions. A similar result for vector-valued harmonic functions on the unit ball of $\mathbb R^n$ was obtained by the authors in \cite{MR4922257}.

\begin{cor}\label{cor2}
Let $H=(h_{1},....,h_{m}) \colon \mathbb B^n \rightarrow \mathbb R^m$ be a map whose components are invariant harmonic functions. Suppose that $\|H(z)\| \leq 1$ for all $z \in \mathbb B^n$. Then,
\be \label{eq17}
\|\nabla H(z)\| \leq \frac{2\,\Gamma(n+1)}{\sqrt{\pi}~\Gamma(n+\frac{1}{2})} \frac{1}{1-\|z\|^2}, \quad z \in \mathbb B^n,
\ee
where $\|\nabla H(z)\|$ denotes the operator norm of the Jacobian matrix $\nabla H(z) \in \mathbb R^{m \times 2n}$.
\end{cor}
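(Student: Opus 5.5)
The plan is to reduce Corollary~\ref{cor2} to the scalar Theorem~\ref{thm1} by a standard unit-vector argument. The operator norm of the Jacobian $\nabla H(z)\in\mathbb R^{m\times 2n}$ equals the norm of its transpose. Hence
$$
\|\nabla H(z)\| = \sup_{\xi\in\mathbb R^m,\ \|\xi\|=1} \bigl\| \nabla H(z)^{T}\xi \bigr\| = \sup_{\|\xi\|=1}\bigl\|\nabla \langle H,\xi\rangle_{\mathbb R^m}(z)\bigr\|,
$$
where $\langle H,\xi\rangle_{\mathbb R^m}=\sum_{k=1}^m \xi_k h_k$. The second equality holds because the row vectors of $\nabla H(z)$ are the Euclidean gradients $\nabla h_k(z)$, so $\nabla H(z)^T\xi=\sum_k\xi_k\nabla h_k(z)$, and this is the gradient of the scalar function $h_\xi:=\sum_k \xi_k h_k$. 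Since the supremum over the unit sphere of $\mathbb R^m$ is attained, it suffices to bound $\|\nabla h_\xi(z)\|$ uniformly in $\xi$.

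Fix a unit vector $\xi$. The operator $\Delta_B$ is linear with real coefficients, so $h_\xi$ is a real-valued invariant harmonic function on $\mathbb B^n$. By Cauchy--Schwarz in $\mathbb R^m$, $|h_\xi(w)|\le \|\xi\|\,\|H(w)\|\le 1$ for every $w\in\mathbb B^n$. Thus $h_\xi$ satisfies exactly the hypotheses of Theorem~\ref{thm1}, and
$$
\|\nabla h_\xi(z)\|\le \frac{2\,\Gamma(n+1)}{\sqrt{\pi}\,\Gamma(n+\frac12)}\frac{1}{1-\|z\|^2}.
$$
Taking the supremum over $\xi$ gives \eqref{eq17}.

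No step here is a genuine obstacle; all the analytic difficulty is already contained in Theorem~\ref{thm1}. The only point needing care is the identification $\|A\|=\|A^T\|$ together with reading the rows of the Jacobian as the gradients $\nabla h_k$, which fixes the convention for $\nabla H$ as an $m\times 2n$ matrix acting from $\mathbb R^{2n}$ to $\mathbb R^m$. One can also check directly that the bound is sharp: for $H=(h,0,\dots,0)$ with $h$ an extremal function from Theorem~\ref{thm1}, the constant is attained.
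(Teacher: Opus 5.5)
Your proposal is correct and follows the paper's proof: the paper also fixes a unit vector $v\in\mathbb R^m$, applies Theorem~\ref{thm1} to the real-valued invariant harmonic function $G_v=H\cdot v$ (bounded by $1$ via Cauchy--Schwarz), and takes the supremum of $\|(\nabla H(z))^{T}v\|$ over all unit $v$. Your remarks on $\|A\|=\|A^{T}\|$ and on sharpness via $H=(h,0,\dots,0)$ are correct and go slightly beyond what the paper states.
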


\medskip
The organization of this paper is as follows. {In Section \ref{sec2}, we derive explicit formulas for the gradient of the Poisson-Szeg\"o kernel given in (\ref{eq18}) and for the transformation of the surface element on the unit ball $\mathbb{B}^n$. Section \ref{sec3} is devoted to the proofs of the main theorem and the corollaries. Finally, in Section \ref{sec4}, we provide an analogue of the classical Schwarz lemma to the setting of invariant harmonic functions, adapting techniques introduced by Burgeth \cite{MR1188585}.}

\section{Preliminaries}\label{sec2}
\subsection{Complex derivatives of the Poisson kernel} 
Let
\be\label{eq18}
\mathcal{P}_{z}(w)=\frac{1}{\sigma({\partial\mathbb B^n})} \frac{(1-\|z\|^2)^n}{|1- \langle z,w \rangle |^{2n}}
\ee
be the Poisson-Szeg\"o (or invariant Poisson) kernel of $\mathbb{B}^n$,
where $\sigma(\partial\mathbb B^n)$ denote the surface area of $\partial\mathbb B^n$.
Let $h(z)$ be a bounded invariant harmonic function on the unit ball $\mathbb B^n$. It has a radial boundary value 
$$
h^{*}(w)=\lim_{r \rightarrow 1^{-}}h(rw)
$$ 
for almost every $w \in \partial\mathbb B^n$. 
Moreover, $h(z)$ is represented as the Poisson integral 
\be\label{eq8}
h(z) = P[h^{*}](z)=\int_{{\partial\mathbb B^n}} \mathcal{P}_{z}(w)  h^{*}(w) d\sigma(w)
\ee
where $d\sigma$ is the standard surface area of $\partial\mathbb B^n$. For more details, see \cite[Theorem~4.3.3, Theorem 3.3.8]{rudin1980function} and \cite{MR1297545}.

Since $\mathcal{P}_{z}(w)$ is a real-valued function, the gradient is given as
$$
\nabla \mathcal{P}_z = 2 \sum_{j=1}^{n} \left(\frac{\partial \mathcal P_z}{\partial \overline z_{j}}\frac{\partial }{\partial z_{j}}+\frac{\partial \mathcal P_z}{\partial {z}_{j}}\frac{\partial}{\partial \overline{z}_{j}}\right)
= 4\,\text{Re }\sum_{j=1}^n \frac{\partial \mathcal P_z}{\partial \overline z_j}\frac{\partial}{\partial z_j}.
$$
 
The partial derivative $\partial {\mathcal{P}_z}/\partial \overline{z}_{j}$ is derived as follows. 
\begin{align}
       \frac{\partial \mathcal{P}_{z}(w)}{\partial \overline{z}_{j}}
  &  = \mathcal P_z(w)\frac{\partial }{\partial \overline{z}_{j}} \log \mathcal P_z(w) \nonumber \\
    &=\frac{1}{\sigma({\partial\mathbb B^n})} \frac{(1-\|z\|^2)^n}{|1- \langle z,w \rangle |^{2n}} \left(
    n\frac{\partial }{\partial \overline{z}_{j}} \log(1-\|z\|^2) 
    - n\frac{\partial }{\partial \overline{z}_{j}} \log(1-\langle w,z\rangle )
    \right) \nonumber \\
     &=\frac{1}{\sigma({\partial\mathbb B^n})}\frac{n(1-\|z\|^2)^n}{|1- \langle z,w \rangle |^{2n}} \left(
    \frac{-z_j}{1-\|z\|^2}
    - \frac{-w_j}{1-\langle w,z \rangle}
    \right). \label{eq5}
\end{align}
This gives
\be\label{eq1}
\frac{\partial \mathcal P_z}{\partial \overline z}
=\frac{n(1-\|z\|^2)^n}{\sigma({\partial\mathbb B^n})|1- \langle z,w \rangle |^{2n}} \left(
    \frac{-z}{1-\|z\|^2}
    + \frac{w}{1-\langle w,z \rangle}
    \right).
\ee
\subsection{Transformation formula for surface element}
For a fixed point $a \in \mathbb{B}^{n} \setminus \{0\}$, let $\phi_a$ be an automorphism of $\mathbb B^n$ given by 
\begin{equation}\label{involution}
\phi_{a}(z)=\frac{a-P_{a}(z)-s_{a}Q_{a}(z)}{1-\langle z,a \rangle}, \quad z\in \mathbb{B}^n,
\end{equation}
where $s_{a}=\sqrt{1-\|a\|^2}$, $P_{a}(z)=a\langle z, a \rangle / \|a\|^2$ represents the orthogonal projection of $\mathbb{C}^n$ onto the one dimensional subspace $[a]$ generated by $a$, and $Q_{a}(z)=z-P_{a}(z)$ is the orthogonal projection onto its orthogonal complement. In the special case $a =0$, we simply define $\phi_{0}(z)=-z$. 
Moreover, these maps are involutive in the sense that $\phi_{a} \circ \phi_{a} (z)=z$ and it satisfies $\phi_a(0)=0$.
The following lemmas are useful in the proof of theorem.

\begin{Lem}{\normalfont(\cite[Lemma 1.3]{MR2115155})}\label{LemmaA}
Suppose $a \in \mathbb{B}^n$. Then
$$
1 - \langle \phi_{a}(z), \phi_{a}(w) \rangle = \frac{(1-\langle a, a \rangle)(1-\langle z, w \rangle)}{(1-\langle z, a \rangle)(1-\langle a, w \rangle)}
$$
for all $z$ and $w$ on the closed unit ball $\overline{\mathbb{B}^n}$.
\end{Lem}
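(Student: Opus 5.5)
Since $\phi_a$ is an automorphism of $\mathbb B^n$, the identity is purely algebraic, and I would verify it by direct computation from the explicit formula \eqref{involution}. It suffices to treat $a\neq 0$; when $a=0$ we have $\phi_0(z)=-z$, and both sides reduce to $1-\langle z,w\rangle$.

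Fix $a\neq0$ and write $\langle z,a\rangle$, $\langle a,w\rangle$ for the scalars that appear in the denominators. By \eqref{involution},
\[
\langle \phi_a(z),\phi_a(w)\rangle=\frac{\langle a-P_a z-s_aQ_a z,\ a-P_a w-s_aQ_a w\rangle}{(1-\langle z,a\rangle)(1-\langle a,w\rangle)},
\]
where the conjugate of the denominator of $\phi_a(w)$ is $1-\langle a,w\rangle$. I then expand the numerator using three facts. First, $P_a$ and $Q_a$ are complementary orthogonal projections, so all cross terms between a $P_a$-component and a $Q_a$-component vanish; in particular $\langle a,Q_a w\rangle=0$, since $a$ lies in $[a]$. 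Second, $\langle a,P_a w\rangle=\langle a,w\rangle$ and $\langle P_a z,a\rangle=\langle z,a\rangle$. Third,
\[
\langle P_a z,P_a w\rangle=\frac{\langle z,a\rangle\langle a,w\rangle}{\|a\|^2}.
\]
With these, the numerator becomes
\[
\|a\|^2-\langle a,w\rangle-\langle z,a\rangle+\langle P_az,P_aw\rangle+s_a^2\langle Q_az,Q_aw\rangle .
\]

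Next I substitute $s_a^2=1-\|a\|^2$ and $\langle Q_az,Q_aw\rangle=\langle z,w\rangle-\langle P_az,P_aw\rangle$. The two $\langle P_az,P_aw\rangle$ terms then combine into
\[
\|a\|^2\,\frac{\langle z,a\rangle\langle a,w\rangle}{\|a\|^2}=\langle z,a\rangle\langle a,w\rangle,
\]
so the numerator equals
\[
\|a\|^2-\langle a,w\rangle-\langle z,a\rangle+\langle z,a\rangle\langle a,w\rangle+(1-\|a\|^2)\langle z,w\rangle .
\]

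Finally I subtract this from $(1-\langle z,a\rangle)(1-\langle a,w\rangle)=1-\langle z,a\rangle-\langle a,w\rangle+\langle z,a\rangle\langle a,w\rangle$. The difference is
\[
1-\|a\|^2-(1-\|a\|^2)\langle z,w\rangle=(1-\|a\|^2)(1-\langle z,w\rangle),
\]
and dividing by the denominator gives exactly the claimed identity.

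The only delicate step is the bookkeeping of conjugations in the sesquilinear form. For example, $\langle P_az,a\rangle=\langle z,a\rangle$ while $\langle a,P_aw\rangle=\langle a,w\rangle$, and the denominator of $\phi_a(w)$ enters conjugated. Getting these right is what makes the expansion close up. For points on the closed ball everything remains valid, because $|\langle z,a\rangle|\le\|a\|<1$, so the denominators never vanish.
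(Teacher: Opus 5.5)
Your computation is correct: the cross terms between the $[a]$ and $[a]^\perp$ components vanish, the substitutions $s_a^2=1-\|a\|^2$ and $\langle Q_az,Q_aw\rangle=\langle z,w\rangle-\langle P_az,P_aw\rangle$ give the numerator you state, subtracting it from $(1-\langle z,a\rangle)(1-\langle a,w\rangle)$ leaves $(1-\|a\|^2)(1-\langle z,w\rangle)$, and the denominators do not vanish on the closed ball. The paper gives no proof of its own and simply cites Zhu's Lemma 1.3, whose proof is essentially this same direct expansion.
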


\begin{lem}\label{Lemma1}
For any $\gamma\in \text{Aut}(\mathbb B^n)$, we have 
    $$
\gamma^*d\sigma (\eta) = \left(\frac{1-\|z\|^2}{|1-\langle z,\eta \rangle|^2}\right)^{n}d\sigma (\eta)
$$
for $z = \gamma^{-1}(0)$.
\end{lem}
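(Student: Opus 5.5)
We need to prove Lemma \ref{Lemma1}: the pullback of surface measure under an automorphism $\gamma$ is $\left(\frac{1-\|z\|^2}{|1-\langle z,\eta\rangle|^2}\right)^n d\sigma(\eta)$ with $z=\gamma^{-1}(0)$. The first step is a reduction to the involutions $\phi_a$. Every $\gamma\in\text{Aut}(\mathbb B^n)$ factors as $\gamma = U\circ\phi_a$ with $U$ unitary and $a=\gamma^{-1}(0)=z$; this is standard, since $\gamma\circ\phi_a$ fixes $0$ and is therefore unitary by Cartan's theorem. Unitary maps preserve $d\sigma$, so $\gamma^*d\sigma = \phi_a^*U^*d\sigma = \phi_a^*d\sigma$. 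Hence it suffices to show
$$
\phi_a^*d\sigma(\eta)=\left(\frac{1-\|a\|^2}{|1-\langle \eta,a\rangle|^2}\right)^{n}d\sigma(\eta),
$$
noting that $|1-\langle a,\eta\rangle|=|1-\langle \eta,a\rangle|$. The case $a=0$ is trivial.

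To prove the formula for $\phi_a$ I would use the real Jacobian of the holomorphic map $\phi_a$ on $\mathbb C^n$. By \cite[Theorem 2.2.6]{rudin1980function} (equivalently, from Lemma~\ref{LemmaA}), its complex Jacobian determinant satisfies
$$
|J_{\mathbb C}\phi_a(\zeta)|^2=\left(\frac{1-\|a\|^2}{|1-\langle \zeta,a\rangle|^2}\right)^{n+1},
$$
so the real volume Jacobian is this $(n+1)$-st power. To pass to the sphere, apply Lemma~\ref{LemmaA} with $w=z$:
$$
1-\|\phi_a(z)\|^2=\frac{(1-\|a\|^2)(1-\|z\|^2)}{|1-\langle z,a\rangle|^2}.
$$
Hence near $\eta\in\partial\mathbb B^n$ the normal derivative of $\rho=1-\|\cdot\|^2$ is scaled by the factor $\frac{1-\|a\|^2}{|1-\langle \eta,a\rangle|^2}$. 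Because $\rho\circ\phi_a = \lambda\,\rho$ with $\lambda$ this factor, we get $|\nabla\rho|(\phi_a(\eta))\cdot(\text{normal stretch})=\lambda|\nabla\rho|(\eta)$, and $|\nabla\rho|=2$ on the sphere. The normal stretch is therefore exactly $\lambda$. Dividing the volume Jacobian $\lambda^{n+1}$ by the normal stretch $\lambda$ gives the surface Jacobian $\lambda^n$, which is the claim.

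Alternatively, to avoid differential-geometric bookkeeping, I can test against functions. For $f$ continuous on the sphere, I would compute $\int f\circ\phi_a\,d\sigma$ by integrating over a thin shell $1-\epsilon<\|\zeta\|<1$ using the volume change of variables and the layer identity above, then let $\epsilon\to0$. A further cross-check comes from Rudin's known formula $\int_{S} f\circ\phi_a\,d\sigma=\int_S f\,P(a,\cdot)\,d\sigma$, with the normalized measure, which is precisely the statement, up to normalization, after using involutivity $\phi_a\circ\phi_a=\mathrm{id}$.

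The main obstacle is the passage from the volume Jacobian to the surface Jacobian, namely justifying that the normal stretch equals $\lambda$. Here $\phi_a$ maps the sphere to itself, and the decomposition of the real tangent space at $\eta$ into the tangent space of the sphere plus the normal direction need not be preserved orthogonally. However, the determinant is computed with respect to a block triangular form, so only the normal component of the image of the normal vector matters. That component is given by $d\rho$, which is exactly what the identity $\rho\circ\phi_a=\lambda\rho$ controls.
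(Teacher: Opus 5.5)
Your proof is correct, and its skeleton matches the paper's. Both start from the real Jacobian $|\det\mathcal J_{\mathbb C}\phi_a|^2=\lambda^{n+1}$ with $\lambda(\eta)=\frac{1-\|a\|^2}{|1-\langle\eta,a\rangle|^2}$ (the paper cites \cite[Lemma~1.7]{MR2115155}), and both conclude that the surface Jacobian is $\lambda^n$. The difference is the passage from volume to surface. The paper simply cites \cite[Lemma~4.1]{S25}, i.e.\ $\gamma^*d\sigma=|\det\mathcal J_{\mathbb C}\gamma|^{2n/(n+1)}d\sigma$ on the sphere. You instead prove this relation for the ball directly. The identity $\rho\circ\phi_a=\lambda\rho$ (Lemma~\ref{LemmaA} with $w=z$) shows that the normal component of $d\phi_a(\eta)\eta$ is exactly $\lambda(\eta)$. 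Since $\phi_a$ preserves the sphere, $d\phi_a(\eta)$ is block triangular with respect to $T_\eta\partial\mathbb B^n\oplus\mathbb R\eta$, so exactly one factor of $\lambda$ comes off $\lambda^{n+1}$. You also make explicit the reduction $\gamma=U\circ\phi_a$, which the paper uses tacitly when it applies a Jacobian formula stated for $\phi_a$ to a general $\gamma$. The paper's route is shorter but rests on an external lemma. Yours is self-contained, uses only Lemma~\ref{LemmaA} and the standard Jacobian formula, and shows why the exponent drops from $n+1$ to $n$. Note also that the identity you list only as a cross-check, $\int_{\partial\mathbb B^n}f\circ\psi\,d\sigma=\int_{\partial\mathbb B^n}P(\psi(0),\cdot)\,f\,d\sigma$ for normalized $\sigma$, already proves the lemma outright: take $\psi=\gamma^{-1}$ and $f=g\circ\gamma$. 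This avoids Jacobians entirely, at the price of invoking the automorphism invariance of the Poisson--Szeg\"o integral.
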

\begin{proof}
    By \cite[Lemma 4.1]{S25}, we have $$
    \gamma^* d\sigma (\eta)
    =|\det \mathcal J_{\mathbb C}\gamma(\eta)|^{\frac{2n}{n+1}} d\sigma(\eta)
    $$
    for any $\eta \in \partial \mathbb B^n$ and $\gamma \in \operatorname{Aut}(\mathbb B^n)$.
    On the other hand, Lemma~1.7 in \cite{MR2115155} gives
    $$
    \det\mathcal J_{\mathbb R}\gamma(\eta) 
    =|\det \mathcal J_{\mathbb C}\gamma(\eta)|^2
    = \left( \frac{1-|z|^2}{|1-\langle \eta, z \rangle |^2}
    \right)^{n+1}.
    $$
    Combining these two results yields the desired lemma.
\end{proof}

\section{Proof of the Main Result and Related Results}\label{sec3}

\subsection{Proof of the inequality in Theorem~\ref{thm1}}

For $z \in \mathbb{B}^n$ and a direction $l=(l_1,\ldots, l_n) \in \partial \mathbb{B}^n$, the Poisson integral representation \eqref{eq8} of $h$ gives
$$
\left\langle \frac{\partial h}{\partial \overline z},l\right\rangle = \int_{{\partial\mathbb B^n}} \left\langle \frac{\partial \mathcal P_z(w)}{\partial \overline z},l \right\rangle  h^{*}(w) d\sigma (w).
$$
Viewing $l$ as a real vector with components
$(\text{Re }l_1, \text{Im }l_1,\ldots, \text{Re }l_n, \text{Im }l_n)$, the dot product $\nabla \mathcal P_z \cdot l$ is given by 
$$
\nabla \mathcal P_z \cdot l
=
\langle \nabla \mathcal{P}_{z}(w),l \rangle=\sum_{j=1}^{n} \left(\frac{\partial \mathcal{P}}{\partial z_{j}} l_{j}+\frac{\partial \mathcal{P}}{\partial \overline{z}_{j}} \overline{l_{j}}\right)
= 2\,\text{Re }\left\langle \frac{\partial \mathcal P_z}{\partial \overline z},l\right\rangle.
$$

Let us first evaluate 
\be \label{eq14}
\mathcal{C}(z,l)
=\int_{\partial\mathbb B^n} \left|\nabla\mathcal P_z \cdot l\right| d\sigma(w)
= \int_{\partial \mathbb{B}^n} 2\left|\text{Re}\left\langle \frac{\partial \mathcal P_z(w)}{\partial \overline z},l \right\rangle \right| d\sigma (w).
\ee
We make the change of variable $w=\phi_{z}(\eta)$, where $\phi_{z}$ is the involutive automorphism given in \eqref{involution}.
Lemma~\ref{Lemma1} gives the boundary Jacobian formula 
$$
d\sigma (w) = \left(\frac{1-\|z\|^2}{|1-\langle z,\eta \rangle|^2}\right)^{n}d\sigma (\eta). 
$$
This implies  
$$
\mathcal{C}(z,l)
= \int_{\partial \mathbb{B}^n} 2\left|\text{Re} \left\langle \frac{\partial \mathcal P_z}{\partial \overline z}(\phi_{z}(\eta)),l \right\rangle \right| \left(\frac{1-\|z\|^2}{|1-\langle z,\eta \rangle|^2}\right)^{n}d\sigma (\eta).
$$

The replacements $a \rightarrow z$, $z \rightarrow 0$ and $w \rightarrow \eta$ in Lemma~\ref{LemmaA} give $$1-\langle z,w \rangle = \frac{1-\|z\|^2}{1-\langle z,\eta \rangle}.$$ Equation \eqref{eq5} then implies that
$$
\frac{\partial \mathcal{P}_z}{\partial \overline{z}_{j}}=\frac{n}{\sigma({\partial\mathbb B^n})(1-\|z\|^2)^{n+1}}\left(\frac{-z+\phi_{z}(\eta)(1-\overline{\langle z,\eta \rangle})}{|1-\langle z,\eta \rangle|^{-2n}} \right).
$$
Therefore
$$
\mathcal{C}(z,l)=\frac{2n}{\sigma({\partial\mathbb B^n})(1-\|z\|^2)}\int_{\partial \mathbb{B}^n} |\text{Re}{((1-\overline{\langle z,\eta \rangle}) \langle \phi_{z}(\eta), l \rangle -\langle z,l \rangle )}| d\sigma (\eta)
$$
and further simplification  
\begin{align*}
(1-\overline{\langle z,\eta \rangle}) \langle \phi_{z}(\eta), l \rangle -\langle z,l \rangle  & = (1-\overline{\langle z,\eta \rangle}) \left \langle \frac{z-P_{z}(\eta)-s_{z}Q_{z}(\eta)}{1-\langle \eta,z \rangle}, l \right \rangle -\langle z,l \rangle \\   
& =  \langle z-P_{z}(\eta)-s_{z}Q_{z}(\eta), l  \rangle -\langle z,l \rangle \\
& = - \langle P_{z}(\eta) + s_{z}Q_{z}(\eta), l  \rangle
\end{align*}
gives 
\begin{align}
\mathcal{C}(z,l) & =\frac{2n}{\sigma({\partial\mathbb B^n})(1-\|z\|^2)}\int_{\partial \mathbb{B}^n} |\text{Re}{(\langle P_{z}(\eta) + s_{z}Q_{z}(\eta), l  \rangle )}| d\sigma (\eta) \notag \\
& = \frac{2n}{\sigma({\partial\mathbb B^n})(1-\|z\|^2)}\int_{\partial \mathbb{B}^n} \left|\text{Re}{\left (\frac{\langle \eta,z \rangle \langle z,l \rangle}{\|z\|^2} + s_{z}\left(\langle \eta,l \rangle  - \frac{\langle \eta,z \rangle \langle z,l \rangle}{\|z\|^2} \right)\right)} \right| d\sigma (\eta) \notag \\
& = \frac{2n}{\sigma({\partial\mathbb B^n})(1-\|z\|^2)}\int_{\partial \mathbb{B}^n} |\text{Re}{\langle \eta, v(z,l) \rangle}| d\sigma (\eta) \label{eq9}
\end{align}
where $$
v(z,l):=\begin{cases}
s_{z}l~+~(1-s_{z})\langle l,z \rangle \frac{z}{\|z\|^2}, &\text{ if } z\neq 0\\
l,& \text{ if } z=0.
\end{cases}
$$
The spherical coordinates $\eta_{1}=e^{i\psi_{1}}\cos\theta_{1}$, $\eta_{2}=e^{i\psi_{2}}\sin\theta_{1}\cos\theta_{2}$,$\cdots $ and by rotational invariance of $d\sigma$, assuming $v=(\|v\|,0,0,...,0)$, we obtain
\begin{align}
\mathcal{C}(z,l) & = \frac{2n \|v\|}{\sigma({\partial\mathbb B^n})(1-\|z\|^2)}\int_{\partial \mathbb{B}^n} |\text{Re}({\eta_{1}})|d\sigma (\eta) \notag \\
& =\frac{2n \|v\|}{\sigma({\partial\mathbb B^n})(1-\|z\|^2)}\int_{0}^{\frac{\pi}{2}}\int_{0}^{2\pi}\int_{\mathbb{S}^{2n-3}}|\cos \psi_{1} \cos \theta_{1}|\cos \theta_{1} \sin^{2n-3}\theta_{1}d\sigma_{\mathbb{S}^{2n-3}} (\eta)d\psi_{1}d\theta_{1} \notag \\
& = \frac{2n \|v\|\sigma(\mathbb{S}^{2n-3})}{\sigma({\partial\mathbb B^n})(1-\|z\|^2)}\int_{0}^{2\pi}|\cos \psi_{1}| d\psi_{1} \int_{0}^{\frac{\pi}{2}} \cos^{2} \theta_{1} \sin^{2n-3}\theta_{1}d\theta_{1} \notag \\
& = \frac{2n \|v\|\sigma(\mathbb{S}^{2n-3})}{\sigma({\partial\mathbb B^n})(1-\|z\|^2)}  \frac{\sqrt{\pi}~\Gamma(n-1)}{\Gamma(n+\frac{1}{2})} \notag \\
& = \frac{2n \|v\|}{1-\|z\|^2}  \frac{\Gamma(n)}{\sqrt{\pi}~\Gamma(n+\frac{1}{2})}. \label{eq12}
\end{align}
This gives that 
$$
|\nabla h(z)\cdot l  | \leq \frac{2n \|v\|}{1-\|z\|^2} \frac{\Gamma(n)}{\sqrt{\pi}~\Gamma(n+\frac{1}{2})}.
$$
Further simplification gives that
\begin{align}
\|v\|^2 & = s^{2}_{z}\|l\|^{2} + (1-s_{z})^{2}\frac{|\langle l,z \rangle|^{2}}{\|z\|^2}+2s_{z}(1-s_{z})\text{Re}{\left \langle l, \frac{\langle l,z \rangle z}{\|z\|^{2}} \right \rangle} \notag \\
& = 1-\|z\|^{2}+ |\langle l, z \rangle|^{2} \quad \text{(since $s_{z}=\sqrt{1-\|z\|^{2}}$)}. \label{eq13}
\end{align}
Clearly $\|v\|=\sqrt{1-\|z\|^{2}+ |\langle l, z \rangle|^{2}} \leq 1$, and hence 
$$
|\nabla h(z)| = \sup_{l \in \partial\mathbb B^n} | \langle \nabla h(z),l \rangle| \leq \frac{2n}{1-\|z\|^2}  \frac{\Gamma(n)}{\sqrt{\pi}~\Gamma(n+\frac{1}{2})}.
$$ 
This completes the proof of the inequality. 

\subsection{Sharpness of the inequality}\label{sharpness}
For a fixed $z \in \mathbb{B}^n$, recall from equations \eqref{eq14} and \eqref{eq12} that
$$
\mathcal{C}(z,l)
= \int_{\partial \mathbb{B}^n} | \nabla \mathcal{P}_{z}(w) \cdot l  | d\sigma (w)=\frac{2n \|v\|}{1-\|z\|^2}  \frac{\Gamma(n)}{\sqrt{\pi}~\Gamma(n+\frac{1}{2})}.
$$
Consider the inequality
\be \label{eq11}
|\nabla h(z)\cdot l  | 
= \left| \int_{{\partial\mathbb B^n}}  (\nabla \mathcal{P}_{z}(w)\cdot l) \,h^{*}(w) d\sigma (w) \right|
\leq \int_{{\partial\mathbb B^n}} \left| \nabla \mathcal{P}_{z}(w) \cdot l \right|  |h^{*}(w)| d\sigma (w).
\ee

Remark that since
$$
 \sgn(\nabla \mathcal{P}_{z}(w) \cdot l )(\nabla \mathcal{P}_{z}(w) \cdot l)  =| \nabla \mathcal{P}_{z}(w)\cdot l |
$$ 
whenever $\nabla \mathcal P_z(w)\cdot l\neq 0$, we obtain the equality in \eqref{eq11} for the boundary function 
$$
h^{*}(w)= \sgn( \nabla \mathcal{P}_{z}(w)\cdot l ) \quad \text{ a.e }~ w\in \partial\mathbb B^n
$$ or 
$$
h^{*}(\phi_{z}(\eta))=\sgn(\text{Re}{\langle \eta, v(z,l) \rangle})\quad  \text{ a.e. }~w\in \partial\mathbb B^n,
$$ 
using equation \eqref{eq9}. In other words
\begin{equation*}
h^{*}(\phi_{z}(\eta))=
\begin{cases}
1,  & \text{Re}{\langle \eta, v(z,l) \rangle} > 0, \vspace{0.15cm} \\
-1, & \text{Re}{\langle \eta, v(z,l) \rangle} < 0,
\end{cases} \hspace{0.5cm} \text{a.e.}
\end{equation*} 

For $z\in \mathbb B^n$ and $l\in \partial\mathbb B^n$, define the hemisphere 
$$
H_{z,l}=\{\eta \in \partial\mathbb B^n \colon \text{Re}{\langle \eta, v(z,l) \rangle} > 0 \},
$$
and a boundary function by
\begin{equation*}
\psi_{z,l}(\eta)=
\begin{cases}
1,  & \text{ if } \eta \in H_{z,l}, \\
-1, & \text{ if } \eta \notin H_{z,l}.
\end{cases} \hspace{0.5cm} 
\end{equation*}
Now define a new boundary function by pulling it back to the original boundary variable $w$ using the involution $\eta = \phi_{z}(w)$ (since $\phi_{z} \circ \phi_{z}=I$):
$$ 
h^{*}_{z,l}(w) := \psi_{z,l}(\phi_{z}(w))= \sgn( \text{Re}{\langle \phi_{z}(w), v(z,l) \rangle}).
$$
Let $h_{z,l}$ be the Poisson integral representation of $h^{*}_{z,l}$ given as
$$
h_{z,l}(\xi) = \int_{{\partial\mathbb B^n}} \mathcal{P}_{\xi}(w) \,h^{*}_{z,l}(w) d\sigma(w).
$$
This gives that $h_{z,l}$ is an invariant harmonic function on $\mathbb B^n$ and $|h_{z,l}(\xi)| \leq 1$ for all $\xi \in \mathbb B^n$. 

{\bf Claim:}
The function $h_{z,l}$ defined as above gives equality in \eqref{eq6}, Theorem \ref{thm1} and hence is the extremal function. 

The proof of the claim is as follows: Since $h^{*}_{z,l}(w)= \sgn( \text{Re}{\langle \eta, v(z,l) \rangle}) $ and pointwise a.e.
$$
\quad \text{Re}{\langle \eta, v(z,l) \rangle} ~\sgn( \text{Re}{\langle \eta, v(z,l) \rangle}) = |\text{Re}{\langle \eta, v(z,l) \rangle}|,
$$
by repeating the computations as done in the proof of Theorem \ref{thm1}, we can show that  
$$
 \nabla h_{z,l}(z)\cdot l 
 = \int_{\partial \mathbb{B}^n} | \nabla \mathcal{P}_{z}(w)\cdot l | d\sigma (w)
=\mathcal{C}(z,l) 
$$
and hence $| \nabla h_{z,l}(z)\cdot l|
=\mathcal{C}(z,l).$ In equation \eqref{eq13}, whenever $l$ is parallel to $z$, i.e., $l=\hat{z}:=z/\|z\|$ for $z \neq 0$, and choosing $l=e_{1}$ for $z=0$, we have $\|v(z,l)\|=1$. Therefore, for the function $h_{z,\hat{z}}$, using equation \eqref{eq12}, we have
$$
|\nabla h_{z,\hat{z}}(z) | 
\geq | \nabla h_{z,\hat{z}}(z)\cdot \hat{z}|
=\mathcal{C}(z,\hat{z})=\frac{2n}{1-\|z\|^2}  \frac{\Gamma(n)}{\sqrt{\pi}~\Gamma(n+\frac{1}{2})},
$$ 
and hence the inequality is sharp at any arbitrary point $z \in \mathbb{B}^n$. 

\brem\label{remark}
Equations \eqref{eq12} and \eqref{eq13} show that the direction parallel to $z$ (the radial direction) gives the maximum size of $\mathcal{C}(z,l)$ in the estimate $|\langle \nabla h(z),l \rangle | \leq \mathcal{C}(z,l)$ for the directional derivative of bounded invariant harmonic function $h$ defined on the unit ball $\mathbb B^n$. The sharpness of the inequality \eqref{eq6} then shows that the same constant works in the estimate for the gradient of function $h.$ This gives a solution to the analogous Khavinson conjecture for bounded invariant harmonic functions on the unit ball $\mathbb B^n$.
\erem

\subsection{Proof of corollaries}
\bpf[Proof of Corollary \ref{cor1}] 
Since \begin{equation}\nonumber
\begin{aligned}
    \|\nabla_B h(z)\|_B^2 
    &= \|\nabla_Bh\circ\phi_z(0)\|_B^2
    = \|\nabla_B(h\circ\phi_z)(0)\|^2_B
    = \frac{1}{n+1}\| \nabla (h\circ\phi_z)(0)\|^2\\
    &\leq \frac{1}{n+1}\left(\frac{2\,\Gamma(n+1)}{\sqrt{\pi}~\Gamma(n+\frac{1}{2})}\right)^2,
\end{aligned}
\end{equation}
and 
\begin{equation}\nonumber
\begin{aligned}
    \|\nabla_B h(z)\|_B^2 
    &= \frac{1-\|z\|^2}{n+1}\left( \sum_j\left| \frac{\partial h}{\partial z_j}\right|^2 - \left|\sum_j z_j\frac{\partial h}{\partial z_j}\right|^2 \right)
    \geq \frac{(1-\|z \|^2)^2}{4(n+1)}\|\nabla h\|^2,
    \end{aligned}
    \end{equation}
    the inequalities are proved. Since \eqref{eq6} is sharp at $z=0$, we also obtain the sharpness of the second inequality.
    
    Let $\gamma:[0,1] \rightarrow \mathbb B^n$ be a piecewise $C^1$ curve with $\gamma(0)=z$ and $\gamma(1)=w$. The chain rule and the Cauchy-Schwarz inequality then give
$$
|h(z)-h(w)| \leq \left| \int_{0}^{1} \langle \nabla_{B} h (\gamma(t)), \gamma'(t)\rangle_{B}\, dt  \right| \leq \int_{0}^{1} \|\nabla_{B} h (\gamma(t))\|_{B} \| \gamma'(t)\|_{B}\, dt.
$$
Using the inequality in the corollary, we obtain
$$
|h(z)-h(w)| 
\leq \frac{2\,\Gamma(n+1)}{\sqrt{\pi(n+1)}~\Gamma(n+\frac{1}{2})} \int_{0}^{1} \| \gamma'(t)\|_{B}\, dt .
$$
By taking the infimum  over all such curves $\gamma$ joining $z$ and $w$, we obtain \eqref{Lipschitz}.
\epf

\bpf[Proof of Corollary \ref{cor2}]
Let $v \in \mathbb R^m$ be any unit vector, $\|v\|=1$, and define the scalar function $G_{v}(z)=  H(z)\cdot v $. Clearly, the function $G_{v}(z)$ is a real-valued invariant harmonic function on $\mathbb B^n$ and the Cauchy-Schwarz inequality $|G_{v}(z)| \leq \|H(z)\| \|v\| \leq 1$ for all $z \in \mathbb B^n$, further gives that $G_{v}(z)$ satisfies the assumptions of Theorem~\ref{thm1}. Inequality \eqref{eq6}) then gives
$$
\|\nabla G_{v}(z)\| = |(\nabla H(z))^{T}v| \leq \frac{2\,\Gamma(n+1)}{\sqrt{\pi}~\Gamma(n+\frac{1}{2})} \frac{1}{1-\|z\|^2}.
$$
Since $v$ is an arbitrary unit vector in $\mathbb R^m$, taking the supremum over all such $v$ yields the desired inequality \eqref{eq17}. 
\epf

\section{Schwarz lemma for invariant harmonic functions using Burgeth's method}\label{sec4} 

The classical Schwarz lemma states that the inequalities 
$$
|f(z)| \leq |z| \quad \text{for all} \quad |z|<1, \quad \text{and} \quad |f'(0)|\leq 1 
$$
hold for all holomorphic mappings $f$ from the unit disk into itself such that $f(0)=0$.
In this section, we adapt the approach of Burgeth \cite{MR1188585} to obtain an analogous Schwarz type result for invariant harmonic functions. Let $\mathds{1}_{A}$ stands for the characteristic function of a set $A \subset {\partial\mathbb B^n}$. Set
\be \label{eq15}
M_{c}^{n}(\|z\|)=2\int_{{\partial\mathbb B^n}}\mathds{1}_{S(c,\hat{z})} ~\mathcal{P}_{z}d\sigma -1,
\ee 
where $z \in \mathbb B^n$ and $$
S(c,\hat{z})=\{w \in {\partial\mathbb B^n}: \text{Re}{\langle \hat{z},w\rangle}>\cos(\alpha(c)),~\alpha(c)\in[0,\pi]\}
$$ 
is a spherical cap with polar angle $\alpha(c)$, center at $\hat{z}$, and of measure $c$.

\begin{thm}\label{thm2}
Let $h$ be a real-valued invariant harmonic function on the unit ball $\mathbb B^n$ such that $|h| \leq 1$, and $h(0)=a,$ $-1<a<1$. Then, for $c=(a+1)/2$ and for all $z \in \mathbb B^n$
$$
h(z) \leq M_{c}^{n}(\|z\|).
$$
 Equality holds if and only if $h$ is the Poisson integral of the characteristic function of a spherical cap on $\partial \mathbb B^n$, up to a rotation. 
\end{thm}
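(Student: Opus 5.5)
The plan is to follow Burgeth's rearrangement argument. The inequality is reduced to a linear extremal problem on $\partial\mathbb B^n$, which is solved by a bathtub-type principle.

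\textbf{Reduction to a linear problem.} First write $h=P[h^*]$ by \eqref{eq8}, where $|h^*|\le 1$ a.e. Put $g=(h^*+1)/2$, so that $0\le g\le 1$. I normalize $\sigma$ so that $\int_{\partial\mathbb B^n}\mathcal P_z\,d\sigma=1$, i.e.\ $\sigma(\partial\mathbb B^n)=1$; this is also the normalization in which the cap $S(c,\hat z)$ has measure $c$. Then
$$
h(z)=2\int_{\partial\mathbb B^n} g\,\mathcal P_z\,d\sigma-1,
\qquad
\int_{\partial\mathbb B^n} g\,d\sigma=\frac{h(0)+1}{2}=c,
$$
using $\mathcal P_0\equiv 1$. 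Hence it suffices to show that
$$
\int g\,\mathcal P_z\,d\sigma\le\int \mathds 1_{S(c,\hat z)}\,\mathcal P_z\,d\sigma
\quad\text{for every measurable } 0\le g\le 1 \text{ with } \int g\,d\sigma=c .
$$
Since $\mathcal P_z$ and $\sigma$ are invariant under unitaries fixing $\hat z$, and any $\hat z$ can be rotated to $e_1$, I may fix $\hat z=e_1$ and write $r=\|z\|$ and $\zeta=w_1=\langle w,\hat z\rangle$.

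\textbf{Bathtub step.} Let $E\subset\partial\mathbb B^n$ be a set of measure $c$ with the property that $\mathcal P_z\ge t$ on $E$ and $\mathcal P_z\le t$ off $E$, for some level $t$. Then
$$
\int(\mathds 1_E-g)(\mathcal P_z-t)\,d\sigma\ge 0,
$$
because the integrand is pointwise nonnegative. Since $\int(\mathds 1_E-g)\,d\sigma=0$, this gives $\int g\mathcal P_z\le\int\mathds 1_E\mathcal P_z$. Moreover, equality forces $g=\mathds 1_E$ a.e., provided $\{\mathcal P_z=t\}$ is $\sigma$-null. This null-set condition holds for $z\ne 0$, because $\mathcal P_z$ is a nonconstant real-analytic function of $\zeta$. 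The inequality and the equality statement, including ``up to a rotation'', therefore follow once $E$ is identified with $S(c,\hat z)$.

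\textbf{Main obstacle: identifying the superlevel set with the cap.} The key step is to show that the extremal set $E$ is the cap $S(c,\hat z)=\{\operatorname{Re}\zeta>\cos\alpha(c)\}$. Here $\mathcal P_z(w)=(1-r^2)^n|1-r\zeta|^{-2n}$ is a decreasing function of $|1-r\zeta|$. The image of $\sigma$ under $w\mapsto\zeta$ has density $\frac{n-1}{\pi}(1-|\zeta|^2)^{n-2}$ on the unit disc. So the comparison reduces to a planar problem: among sets of measure $c$ for this weight, maximize $\int |1-r\zeta|^{-2n}$. I would first try to show that the half-plane section $\{\operatorname{Re}\zeta>\cos\alpha\}$ beats any competitor. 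To do this, I would write the difference $\int(\mathds 1_{S}-\mathds 1_{F})\,\mathcal P_z$ for a competitor $F$ of the same measure and pair the pieces of $S\setminus F$ and $F\setminus S$ by reflecting across the line $\operatorname{Re}\zeta=\cos\alpha$, hoping that the reflection increases $|1-r\zeta|$.

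I expect this pairing step to be the genuinely delicate point. The true superlevel sets of $\mathcal P_z$ are the discs $\{|1-r\zeta|<\rho\}$, which are not half-planes. In particular, for $n=1$ the superlevel sets are arcs, and they coincide with caps only after one checks that arcs centred at $\hat z$ are exactly the sets $\{\operatorname{Re}\zeta>\cos\alpha\}$ on the circle. If the reflection argument fails for $n\ge 2$, I would instead test whether the cap is still optimal by computing the first variation of $\int\mathds 1_S\mathcal P_z$ along measure-preserving deformations of the boundary $\operatorname{Re}\zeta=\cos\alpha$. Optimality requires $\mathcal P_z$ to be constant on $\partial S$, so checking this variation is the concrete test I would run first. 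If the variation does not vanish, the bathtub step shows that the extremal set is a superlevel set rather than the cap, and the cap can be optimal only where the two sets agree.
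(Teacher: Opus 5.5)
\textbf{There is a genuine gap, and it cannot be closed.} The step you single out as the main obstacle cannot be carried out. Your own bathtub step already shows why: for $n\ge 2$, $0<c<1$ and $z\neq 0$, the statement as written is false, so no reflection or pairing argument can rescue it. Put $\hat z=e_1$ and $\zeta=w_1$. Since the level sets are null, as you note, the bathtub principle gives a maximizer of $\int g\,\mathcal P_z\,d\sigma$ that is unique up to null sets. It corresponds to the disc section $\{\zeta\in\mathbb D:|\zeta-1/r|<\rho/r\}$. By contrast, $S(c,\hat z)$ corresponds to the half-plane section $\{\zeta\in\mathbb D:\operatorname{Re}\zeta>\cos\alpha(c)\}$.

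The first-variation test you propose settles the matter at once. On $\partial S$,
\[
|1-r\zeta|^2=(1-r\cos\alpha)^2+r^2(\operatorname{Im}\zeta)^2 ,
\]
and for $n\ge 2$ the quantity $\operatorname{Im}\zeta$ ranges over $(-\sin\alpha,\sin\alpha)$, so $\mathcal P_z$ is not constant there. Explicitly, fix $0<y<\sin\alpha$ and a small $\epsilon>0$. The values of $|1-r\zeta|^2$ at the point $\cos\alpha+\epsilon+iy\in S$ and at the point $\cos\alpha-\epsilon\notin S$ differ by $r^2y^2-4r\epsilon(1-r\cos\alpha)>0$. Hence the kernel is strictly smaller at the point inside $S$. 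Exchanging two small discs of equal $\mu$-measure around these points therefore strictly increases $\int\mathds 1_S\mathcal P_z\,d\sigma$. Now let $E$ be the superlevel set of measure $c$. Then $h=P[2\mathds 1_E-1]$ satisfies $|h|\le 1$ and $h(0)=a$, but $h(z)>M_c^n(\|z\|)$.

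The paper gives no argument beyond ``the same lines as Burgeth'', and this is exactly where those lines break. Burgeth's Euclidean and real-hyperbolic kernels depend only on $\langle \hat x,\zeta\rangle$, so their superlevel sets are caps. The kernel $\mathcal P_z$ instead depends on the complex number $\langle w,\hat z\rangle$, including its modulus, and it is invariant only under the unitary maps fixing $\hat z$.

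\textbf{What your argument does prove.}
\begin{itemize}
\item The case $n=1$. On the circle $|1-r\zeta|^2=1-2r\operatorname{Re}\zeta+r^2$, so arcs centred at $\hat z$ are exactly the superlevel sets.
\item For $n\ge 2$, a corrected statement. Replace $S(c,\hat z)$ by $E_c(z)=\{w\in\partial\mathbb B^n:|1-\langle z,w\rangle|<\rho\}$, with $\rho$ chosen so that the normalized measure is $c$. Equality at $z\neq0$ then holds iff $h^*=2\mathds 1_{E_c(z)}-1$ a.e.
\end{itemize}
Note that $E_c(z)$ depends on $\|z\|$ and not only on $\hat z$. So, unlike in Burgeth's setting, no single extremal function works along a whole radius.

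In any case, the equality clause should refer to $P[2\mathds 1_S-1]$. The Poisson integral of $\mathds 1_S$ itself takes the value $c$, not $a$, at the origin.

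Rather than leaving the outcome open, you should end with this counterexample and the corrected statement.
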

\bpf
The proof follows the same lines as that of \cite[Theorem 1]{MR1188585} and is therefore omitted. 
\epf

In the planar case $n=1$, the extremal function $M_{c}^{1}$ can be computed directly from \eqref{eq15} on $\mathbb B^{1}$ yielding, 
\be \label{extremal1}
M_{c}^{1}(\|z\|)=\frac{4}{\pi}\arctan\left(\frac{1+\|z\|}{1-\|z\|}\tan \frac{\alpha(c)}{2} \right)-1.
\ee
In particular, for $a=0$ (so $\alpha (c)=\pi/2$) one obtains $M_{1/2}^{1}(\|z\|)=\frac{4}{\pi}\arctan (\|z\|)$, recovering the classical estimate of Schwarz \cite{schwarz1972}.

Let us simplify the integral on the right-hand side of \eqref{eq15} using rotational invariance of $\sigma$ and rewriting it in spherical coordinates to obtain an explicit integral representation depending only on $r=\|z\|$. This makes the extremal function look transparent and recovers the relation \eqref{extremal1}. Assume $n \geq 2$. We use the identification $\mathbb{C}^{n} \cong \mathbb{R}^{2n}$ and introduce the real spherical coordinates on $\partial\mathbb B^n \cong \mathbb{S}^{2n-1}$. Write a point $w \in \partial\mathbb B^n \subset{\mathbb{C}^{n}}$ as $w=(w_{1}, w_{2},...,w_{n})$, such that $w_{j}=a_{j}+ib_{j}$, and the corresponding real vector is $(a_{1},b_{1},....,a_{n},b_{n}) \in \mathbb{R}^{2n}$.  Introducing the real spherical coordinates $a_{1}=\cos\theta_{1}$, $b_{1}=\sin\theta_{1}\cos\theta_{2}$,$\cdot \cdot \cdot $ and by rotational invariance of $\sigma$, assuming $z=(\|z\|,0,0,...,0)$, we derive
\begin{align*}
&\quad M_{c}^{n}(\|z\|)\\
&=\frac{2}{\sigma({\partial\mathbb B^n})}\int_{0}^{\alpha(c)}\int_{0}^{\pi}\int_{\mathbb{S}^{2n-3}}\frac{(1-\|z\|^2)^{n}}{|1-\|z\|\overline{w_{1}}|^{2n}}\sin^{2n-2}\theta_{1} \sin^{2n-3}\theta_{2}d\sigma_{\mathbb{S}^{2n-3}}(\zeta)d\theta_{2}d\theta_{1} -1 \\
& = \frac{2(1-\|z\|^2)^{n}\sigma(\mathbb{S}^{2n-3})}{\sigma({\partial\mathbb B^n})}\int_{0}^{\alpha(c)}\int_{0}^{\pi}\frac{\sin^{2n-2}\theta_{1} \sin^{2n-3}\theta_{2}}{[(1-\|z\|\cos \theta_{1})^{2}+(\|z\|\sin \theta_{1} \cos \theta_{2})^{2}]^{n}} d\theta_{2}d\theta_{1} -1 \\
& = \frac{2~\Gamma (n)(1-\|z\|^2)^{n}}{\pi \Gamma (n-1)}\int_{0}^{\alpha(c)}\int_{0}^{\pi}\frac{\sin^{2n-2}\theta_{1} \sin^{2n-3}\theta_{2}}{[(1-\|z\|\cos \theta_{1})^{2}+(\|z\|\sin \theta_{1} \cos \theta_{2})^{2}]^{n}} d\theta_{2}d\theta_{1} -1,
\end{align*}
using the fact that $\sigma(\mathbb{S}^{k-1})=\frac{2\pi ^{k/2}}{\Gamma(k/2)}$ for any $k\in\mathbb N$.

\medskip
\subsection*{Acknowledgement}
This work was supported by the National Research Foundation of Korea (NRF) grant funded by the Korea government (MSIT) (No. RS-2025-00561084).


\begin{thebibliography}{150}

\bibitem{MR1188585} Burgeth, B.,
\emph{A {S}chwarz lemma for harmonic and hyperbolic-harmonic functions in higher dimensions},
Manuscripta Math. {\bf 77}(2-3) (1992), 283--291.

\bibitem{MR1029679} Colonna, F.,
\emph{The {B}loch constant of bounded harmonic mappings},
Indiana Univ. Math. J. {\bf 38}(4) (1989), 829--840.

\bibitem{MR3123573} Chen, H.,
\emph{The {S}chwarz-{P}ick lemma and {J}ulia lemma for real planar harmonic mappings},
Sci. China Math. {\bf 56}(11) (2013), 2327--2334.

\bibitem{MR1165171} Khavinson, D.,
\emph{An extremal problem for harmonic functions in the ball},
Canad. Math. Bull. {\bf 35}(2) (1992), 218--220.

\bibitem{MR2839860} Kresin, G. and Maz'ya, V.,
\emph{Sharp pointwise estimates for directional derivatives of harmonic functions in a multidimensional ball},
J. Math. Sci. {\bf 169}(2) (2010), 167--187.

\bibitem{MR3725479} Kalaj, D.,
\emph{A proof of {K}havinson's conjecture in {$\mathbb{R}^{4}$}},
Bull. Lond. Math. Soc. {\bf 49}(4) (2017), 561--570.

\bibitem{MR2833528} Kalaj, D. and Vuorinen, M.,
\emph{On harmonic functions and the {S}chwarz lemma},
Proc. Amer. Math. Soc. {\bf 140}(1) (2012), 161--165.

\bibitem{MR4263696} Liu, C.,
\emph{A proof of the {K}havinson conjecture},
Math. Ann. {\bf 380}(1-2) (2021), 719--732.

\bibitem{MR4490949} Liu, C.,
\emph{Schwarz-pick lemma for harmonic functions},
Int. Math. Res. Not. IMRN {\bf 19} (2022), 15092--15110.

\bibitem{MR3281685} Markovi\'{c}, M.,
\emph{On harmonic functions and the hyperbolic metric},
Indag. Math. {\bf 26}(1) (2015), 19--23.

\bibitem{MR3619443} Markovi\'{c}, M.,
\emph{Solution to the {K}havinson problem near the boundary of the unit ball},
Constr. Approx. {\bf 45}(2) (2017), 243--271.

\bibitem{MR3753181} Melentijevi\'{c}, P.,
\emph{Invariant gradient in refinements of {S}chwarz and {H}arnack inequalities},
Ann. Acad. Sci. Fenn. Math. {\bf 43}(1) (2018), 391--399.

\bibitem{MR3794078} Mateljevi\'{c}, M.,
\emph{Schwarz lemma and {K}obayashi metrics for harmonic and holomorphic functions},
J. Math. Anal. Appl. {\bf 464}(1) (2018), 78--100.

\bibitem{MR3975032} Melentijevi\'{c}, P.,
\emph{A proof of the {K}havinson conjecture in {$\mathbb{R}^{3}$}},
Adv. Math. {\bf 352} (2019), 1044--1065.

\bibitem{rudin1980function} Rudin, W.,
\emph{Function Theory in the Unit Ball of     $\mathbb{C}^{n}$},
Grundlehren Math. Wiss., Springer, (1980).

\bibitem{schwarz1972} Schwarz, H. A.,
\emph{Gesammelte mathematische abhandlungen},
American Mathematical Soc., {\bf 260} (1972).

\bibitem{S25} Seo, A.,
\emph{On discrete subgroups of the complex unit ball},
Math. Nachr. {\bf 298}(10) (2025), 3272--3286.

\bibitem{MR1297545} Stoll, M.,
\emph{Invariant potential theory in the unit ball of {${\bf C}^n$}},
London Mathematical Society Lecture Note Series, Cambridge University Press, {\bf 199} (1994).

\bibitem{MR4922257} Xu, Z. and Yu, T. and Huo, Q.,
\emph{Schwarz lemma for harmonic functions in the unit ball},
Proc. Edinb. Math. Soc. (2) {\bf 68}(2) (2025), 616--633.

\bibitem{MR2115155} Zhu, K.,
\emph{Spaces of holomorphic functions in the unit ball},
Graduate Texts in Mathematics, Springer-Verlag, New York {\bf 226} (2005).

\end{thebibliography}
\end{document}